\definecolor{labelkey}{rgb}{0.6,0,0}
\newcommandx{\change}[2][1=]{\todo[#1]{#2}}
\newcommandx{\unsure}[2][1=]{\todo[linecolor=red,backgroundcolor=red!25,bordercolor=red,#1]{#2}}
\newcommandx{\rmk}[2][1=]{\todo[linecolor=blue,backgroundcolor=blue!25,bordercolor=blue,#1]{#2}}
\newcommandx{\info}[2][1=]{\todo[linecolor=OliveGreen,backgroundcolor=OliveGreen!25,bordercolor=OliveGreen,#1]{#2}}
\newcommandx{\improvement}[2][1=]{\todo[linecolor=Plum,backgroundcolor=Plum!25,bordercolor=Plum,#1]{#2}}
\newcommandx{\thiswillnotshow}[2][1=]{\todo[disable,#1]{#2}}
\providecommand{\abs}[1]{\left\vert#1\right\vert}
\newtheorem{thm}{Theorem}[section]
\newtheorem{prop}[thm]{Proposition}
\theoremstyle{definition}
\theoremstyle{remark}
\def\ud{\mathrm{d}}
\def\r{\mathbb{R}}
\def\no{\nonumber}
\def\ue{\mathrm{e}}
\title{Determining the collision kernel in the Boltzmann equation near the equilibrium}
\author[1]{Li Li \thanks{lili@ipam.ucla.edu}}
\affil[1]{Institute for Pure and Applied Mathematics, University of California,\protect\\ Los Angeles, CA 90095, USA}
\author[1]{Zhimeng Ouyang \thanks{zouyang@ipam.ucla.edu}}
\date{}
\begin{document}

\maketitle

\noindent \textbf{ABSTRACT.}\, We consider an inverse problem for the nonlinear Boltzmann equation near the equilibrium. Our goal is to determine the collision kernel in the Boltzmann equation from the knowledge of the Albedo operator. Our approach relies on a 
linearization technique as well as the injectivity of the Gauss-Weierstrass transform.

\smallskip
\section{Introduction} \label{Sec:Intro}

We consider the following evolutionary Boltzmann equation
\begin{equation}\label{Boltz}
\partial_t F+ v\cdot \nabla_x F= Q(F, F).
\end{equation}
Here $F(t, x, v)$ is the kinetic distribution function and the collision operator $Q$
is defined by
\begin{equation}\label{collQ}
Q(F_1, F_2):= \int_{\mathbb{R}^3}\int_{\mathbb{S}^2}
q(\theta, |v-u|)\big[F_1(u')F_2(v')-F_1(u)F_2(v)\big]\ud\omega \ud u
\end{equation}
where the vectors
\begin{equation}\label{uv}
u^{\prime}=u-[(u-v) \cdot \omega] \omega, \qquad v^{\prime}=v+[(u-v) \cdot \omega] \omega
\end{equation}
are velocities after a collision of particles with original velocities $u, v$ and $\theta\in [0, \frac{\pi}{2}]$ satisfies
\begin{equation}\label{cos}
\cos\theta= \frac{|(v-u)\cdot \omega|}{|v-u|}.
\end{equation}
The collision operator $Q$ describes the particle interaction and $q$ is called the collision kernel (or collision cross section). In this paper, we focus on $q$ which has the form 
\begin{equation}\label{qform}
q(\theta, |v-u|)= |v-u|^\gamma q_0(\theta),
\end{equation}
where the constant $\gamma$ satisfies $0\leq \gamma \leq 1$ (hard potential) and the smooth function $q_0$ satisfies $0\leq q_0(\theta)\leq C\cos\theta$ (angular cutoff).
This is an assumption introduced by Grad (see e.g. \cite{grad1958kinetic}) to tame the singularity of the collision kernel at $\theta=0$ and is one of the most well-accepted models.

To formulate our inverse problem, we consider the initial (in-flow) boundary value problem
\begin{equation}\label{ibvp}
\begin{cases}\;\partial_t F+ v\cdot \nabla_x F= Q(F, F) \quad& \mathbb{R}_+\times\Omega\times\mathbb{R}^{3}\\ 
\;F=G & \mathbb{R}_+\times\Gamma_{-}\\
\;F(0, x, v)= \mu & \Omega\times\mathbb{R}^{3}\end{cases}
\end{equation}
and we formally define the Albedo operator
\begin{equation}\label{Alop}
\mathcal{A}: G\to F|_{\mathbb{R}_+\times\Gamma_{+}}.
\end{equation}
Here the Gaussian function (normalized Maxwellian) $\mu$ is defined by 
\begin{equation}\label{Gf}
\mu(v):= \ue^{-|v|^2},
\end{equation}
$\mathbb{R}_+:=\{t: t>0\}$, $\Omega$ is a bounded, strictly convex domain with smooth boundary and 
$$\Gamma_\pm:= \big\{(x, v)\in \partial\Omega\times\mathbb{R}^3: \pm n(x)\cdot v> 0 \big\}$$
where $n(x)$ is the unit outer normal to $\partial\Omega$ at $x\in \partial\Omega$.

We will see that (\ref{ibvp}) is well-posed for continuous $G$ which is a small perturbation around the equilibrium $\mu$
so (\ref{Alop}) is well-defined for such $G$.
The following theorem is our main result.
\begin{thm} \label{Thm:main-1}
Let $\mathcal{A}^{(1)}, \mathcal{A}^{(2)}$ be the Albedo operators corresponding to
the collision kernels $q^{(1)}, q^{(2)}$ satisfying (\ref{qform}) and an additional symmetric assumption. Suppose $\mathcal{A}^{(1)}= \mathcal{A}^{(2)}$. Then $q^{(1)}= q^{(2)}$.
\end{thm}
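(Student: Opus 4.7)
The plan is to combine a higher-order linearization of the Albedo map around the Maxwellian $\mu$ with a geometric-optics reduction and the injectivity of the Gauss--Weierstrass transform. First I would establish local well-posedness of (\ref{ibvp}) for boundary data of the form $G = \mu + \sum_{j=1}^{N} \varepsilon_j g_j$ with the $\varepsilon_j$'s small, and expand the solution in the $\varepsilon_j$'s. Using $Q(\mu, \mu)=0$, the first derivatives $F_j := \partial_{\varepsilon_j} F|_{\varepsilon=0}$ and the second mixed derivatives $F_{jk} := \partial^2_{\varepsilon_j \varepsilon_k} F|_{\varepsilon=0}$ satisfy
\begin{equation*}
(\partial_t + v\cdot\nabla_x - \mathcal{L})\, F_j = 0, \qquad F_j|_{\Gamma_-} = g_j, \quad F_j|_{t=0}=0,
\end{equation*}
\begin{equation*}
(\partial_t + v\cdot\nabla_x - \mathcal{L})\, F_{jk} = Q(F_j, F_k) + Q(F_k, F_j), \qquad F_{jk}|_{\Gamma_-}=0, \quad F_{jk}|_{t=0}=0,
\end{equation*}
where $\mathcal{L} f := Q(\mu, f)+Q(f,\mu)$. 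The assumption $\mathcal{A}^{(1)}=\mathcal{A}^{(2)}$ transfers order by order to $F_j^{(1)}|_{\Gamma_+}=F_j^{(2)}|_{\Gamma_+}$ and $F_{jk}^{(1)}|_{\Gamma_+}=F_{jk}^{(2)}|_{\Gamma_+}$ for all admissible $g_j$.

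The first-order equality says that the two linearized Albedo maps agree; by a standard inverse-transport duality (test against solutions of the adjoint linearized equation) this forces the interior traces to coincide, so $F_j^{(1)}=F_j^{(2)}$ in $\mathbb{R}_+\times\Omega\times\r^3$. Turning to second order and pairing $F_{jk}^{(i)}$ with a backward-in-time solution $\psi$ of the adjoint linearized Boltzmann equation that carries prescribed data on $\Gamma_+$, Green's identity eliminates every boundary contribution and yields
\begin{equation*}
\int \psi \cdot \big[(Q^{(1)}-Q^{(2)})(F_j,F_k) + (Q^{(1)}-Q^{(2)})(F_k,F_j)\big]\,\ud t\,\ud x\,\ud v = 0
\end{equation*}
for every triple $(g_j, g_k, \psi)$ in an appropriate class.

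The third and decisive step is to specialize $g_j$, $g_k$, and the $\Gamma_+$-data for $\psi$ to narrow Gaussian beams concentrated on prescribed incoming and outgoing characteristics. To leading order in the beam width, each of $F_j$, $F_k$, $\psi$ reduces to free transport of its boundary datum (collisional effects contribute strictly higher order), and one can tune the three beams so that they meet at a common spacetime point $(t_*, x_*)\in\mathbb{R}_+\times\Omega$ with velocities $(v_*, u_*)$ related through (\ref{uv})--(\ref{cos}). The previous identity then becomes a weighted integral of $q^{(1)}-q^{(2)}$ against residual Gaussian factors inherited from the Maxwellian background. A change of variables in the leftover $(u,\omega)$ integration recasts this as the Gauss--Weierstrass (heat) transform of $q_0^{(1)}-q_0^{(2)}$; its injectivity, together with the symmetric assumption that eliminates parity ambiguities, forces $q_0^{(1)}=q_0^{(2)}$. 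A final scaling in the relative-speed parameter of the beams isolates the prefactor $|v-u|^\gamma$, pins down $\gamma$, and completes the identification $q^{(1)}=q^{(2)}$.

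The part I expect to be hardest is the passage from the abstract vanishing identity to a genuine Gauss--Weierstrass equation. Because $Q$ already integrates against $(u, \omega)$ and the equilibrium is Maxwellian, no beam construction gives a pointwise read-off of $q(\theta, |v-u|)$: only a Gaussian-smoothed version is visible in the Albedo data. Engineering a beam geometry whose singular structure is transverse to the dual beam and compatible with (\ref{uv})--(\ref{cos}), and then verifying that the resulting convolution is honestly a Gauss--Weierstrass transform (so that its kernel is trivial) rather than a more degenerate smoothing, is the technical heart of the proof.
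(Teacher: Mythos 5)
Your strategy differs fundamentally from the paper's, and it contains a gap that the paper's simpler route avoids. The paper uses only a \emph{first-order} linearization: the linearized Boltzmann operator $\tilde{L}=\nu-\tilde{K}$ already carries the collision kernel through $\nu$, $k_1$ and $k_2$ (formulas (\ref{nu})--(\ref{k2})), so one applies the singular decomposition of the linear Albedo operator from \cite{choulli1996inverse} to recover $\nu$ and $K$, and then two applications of Gauss--Weierstrass injectivity (one on $\r^3$ for $\nu$ and $k_1$, one on the plane $\Pi$ for $k_2$) recover $\tilde{q}$, hence $q$ under the symmetry assumption (\ref{symB}). Your second-order machinery is therefore unnecessary; worse, the step that launches it is circular. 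You assert that equality of the first-order boundary traces ``forces the interior traces to coincide, so $F_j^{(1)}=F_j^{(2)}$.'' But $F_j^{(1)}$ and $F_j^{(2)}$ solve \emph{different} linear equations, with linearized collision operators $\mathcal{L}^{(1)}\neq\mathcal{L}^{(2)}$ built from the two unknown kernels; equal incoming and equal outgoing data for two different transport equations do not give equal interior solutions by any duality argument. To conclude $F_j^{(1)}=F_j^{(2)}$ you must first prove $\mathcal{L}^{(1)}=\mathcal{L}^{(2)}$, i.e. solve the linear inverse problem --- and at that point the kernel is already determined by the Gauss--Weierstrass argument, so the second-order identity never needs to be written down. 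If you instead try to run the second-order step without first-order interior equality, the Green's identity acquires uncontrolled cross terms of the form $Q^{(1)}(F_j^{(1)},F_k^{(1)})-Q^{(2)}(F_j^{(2)},F_k^{(2)})$ with unequal first-order factors, and the concentration argument does not close.

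You did correctly identify the two load-bearing ingredients --- that only a Gaussian-smoothed version of $q$ is visible because the equilibrium is Maxwellian, and that injectivity of the Gauss--Weierstrass transform together with the symmetry assumption is what ultimately pins down $q$. Your worry about whether the resulting convolution is ``honestly'' a Gauss--Weierstrass transform is answered affirmatively in the paper by the explicit formulas (\ref{nu}), (\ref{k1}), (\ref{k2}): no beam geometry has to be engineered, because the convolution structure is already present in the classical expressions for the collision frequency and the kernels $k_1,k_2$. The singular information is extracted not by intersecting beams but by testing the Schwartz kernel of $A^{lin}$ against concentrating families $\phi_\epsilon$ and $\psi_{\epsilon_1,\epsilon_2}$ adapted to the supports of the $\delta$-type terms $\alpha_0$ and $\alpha_1$, which reads off $\nu(v)$ and $\tilde{k}(v,v')$ pointwise.
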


We will provide a precise statement of the main theorem in Section 4 after we introduce more definitions and notations in later sections.

\subsection{Connection with earlier literature}

So far there have been many contributions in the mathematical study of different aspects of the forward problem for the Boltzmann equation. See e.g. \cite{cercignani1988boltzmann,cercignani1994dilute,glassey1996cauchy,ukai1986boltzmann}.
In the regime of bounded domains with physical boundary conditions,
the Boltzmann equation with angular cutoff has been proved by Guo to be globally well-posed and stable for small data near the Maxwellian equilibrium state for all four basic types of boundary conditions (see \cite{Guo2010}).
Other related results can be found in \cite{guiraud1975Htheorem,shizuta1977boltzmann,DesvillettesVillani2005boltzmann}.
On the other hand, the global well-posedness in bounded domains for the model without angular cutoff and for general solutions that are far from equilibrium (e.g. near vacuum) are completely open.

Inverse problems for linear transport equations have been extensively studied as well. We refer readers to \cite{bal2009inverse} for a survey on this topic. The inverse problem is to determine optical parameters from the knowledge of the Albedo operator associated with the linear Boltzmann equation (radiative transfer equation). Based on the singular decomposition of the Albedo operator, uniqueness results for inverse problems for the linear evolutionary Boltzmann equation have been obtained in \cite{choulli1996inverse}. The stationary case has been studied in \cite{choulli1999inverse} and the related stability estimates have been obtained (see e.g. \cite{lai2019inverse, wang1999stability}).

Fewer uniqueness results for inverse problems for the nonlinear Boltzmann equation have been obtained yet. An inverse problem for the nonlinear relativistic Boltzmann equation was studied in \cite{balehowsky2020inverse}, where the authors showed that the Lorentzian spacetime can be determined from the associated source-to-solution map up to an isometry for a fixed collision kernel.
For determining the collision kernel, a more related work is \cite{lai2021reconstruction}. In \cite{lai2021reconstruction}, the authors studied an inverse problem for the nonlinear stationary Boltzmann equation near the vacuum. They proved that the collision kernel can be determined from the associated Albedo operator under appropriate assumptions.
Compared with our $q$ in (\ref{qform}), the collision kernel studied in \cite{lai2021reconstruction} has a more general form. The main restriction in \cite{lai2021reconstruction} is the strong $L^1$ bound condition (see (1.5) in \cite{lai2021reconstruction}), which excludes the most classical hard sphere case
\begin{equation}\label{hsphere}
q(\theta, |v-u|)= c|v-u|\cos\theta
\end{equation}
(a special case of (\ref{qform})) arsing in the kinetic theory. We mention that the arguments in both \cite{balehowsky2020inverse} and \cite{lai2021reconstruction}
depend on the higher order multiple-fold linearization method introduced in
\cite{kurylev2018inverse}. This method has wide applications in solving inverse problems for nonlinear equations. See e.g.
\cite{feizmohammadi2020inverse, krupchyk2020remark, lassas2020partial, lassas2018inverse, li2021inverse, uhlmann2021inverse}.

Instead of the multiple-fold linearization method, a first order linearization method will be applied in this paper. This enables us to relate our problem to the one studied in \cite{choulli1996inverse}. The key point is that the information of the collision kernel in the nonlinear equation is encoded in the parameters in the associated linear equation. Thus we will be able to  determine the collision kernel once we apply the uniqueness result in \cite{choulli1996inverse} to determine the parameters.

\subsection{Organization}

The rest of this paper is organized in the following way. For later use, we will review some basic theories of the linear transport equation and the nonlinear Boltzmann equation near the equilibrium in Section~\ref{Sec:Prelim}. Based on the arguments in \cite{Guo2010},
we will show the well-posedness of the forward problem and relate our nonlinear problem to the linear one studied in \cite{choulli1996inverse} in Section~\ref{Sec:WPL}. In section~\ref{Sec:IP}, we will first determine the parameters in the linear equation based on the result in \cite{choulli1996inverse}. Then we will explicitly present our main theorem and further determine the collision kernel based on the injectivity of the Gauss-Weierstrass transform.
\medskip

\noindent \textbf{Acknowledgements.} L.L. and Z.O. are partly supported by the Simons Foundation. L.L. would like to thank Professor Gunther Uhlmann for helpful discussions.

\smallskip
\section{Preliminaries} \label{Sec:Prelim}
\subsection{Linear transport equation} \label{SubSec:Transport}
Let $\nu(v)$ be a positive function such that $\nu h_0\in L^1(\Omega\times \mathbb{R}^3)$ for any continuous $h_0$ compactly supported in $\Omega\times \mathbb{R}^3$. Let $K$ be a bounded linear integral operator on $L^1(\Omega\times \mathbb{R}^3)$ corresponding to a positive symmetric kernel $k(v, v')$, i.e. 
$$(Kh_0)(x, v')= \int_{\r^3} k(v, v')h_0(x, v)\,\ud v.$$

It is known that the semigroup $U_j(t): h_0\to h$ ($j= 0, 1, 2$)
associated with
\begin{equation}\label{semigpU}
\begin{cases}\;\partial_t h+ v\cdot \nabla_x h+ L_j h= 0 \quad& \mathbb{R}_+\times\Omega\times\mathbb{R}^{3}\\ 
\;h= 0 & \mathbb{R}_+\times\Gamma_{-}\\
\;h(0, x, v)= h_0 & \Omega\times\mathbb{R}^{3}\end{cases}
\end{equation}
is strongly continuous on 
$L^1(\Omega\times \mathbb{R}^3)$ where we define 
$$L_0:= 0,\qquad L_1h_0:= \nu h_0,\qquad L_2:= L_1- K.$$ 
(In fact this holds for more general position-dependent $\nu, k$. See e.g. \cite[Theorem~1]{vidav1968existence}.)  Clearly,
$$U_1(t)h_0= 1_{t\leq \tau_-(x,v)}\ue^{-\nu(v)t}h_0(x-tv, v)
= \ue^{-\nu(v)t}U_0(t)h_0,$$
where $\tau_-$ is the exit time function defined by
\begin{equation}\label{exit}
\tau_-(x,v):=\sup\big\{t\geq 0: x-tv\in \Omega\big\},
\end{equation}
and by Duhamel's principle we have
$$U_2(t)= U_1(t)+ \int^t_0 U_2(t-s)KU_1(s)\,\ud s.$$

Let $G_-(t): g\to h$ denote the solution operator associated with
\begin{equation}\label{ibvpL1g}
\begin{cases}\;\partial_t h+ v\cdot \nabla_x h+ L_1 h= 0 \quad& \mathbb{R}_+\times\Omega\times\mathbb{R}^{3}\\ 
\;h= g & \mathbb{R}_+\times\Gamma_{-}\\
\;h(0, x, v)= 0 & \Omega\times\mathbb{R}^{3}.\end{cases}
\end{equation}
We have
$$G_-(t)g= \ue^{-\nu(v)\tau_-(x,v)}g(t- \tau_-(x,v), x- \tau_-(x,v)v, v).$$
(We define $g(t):= 0$ for $t\leq 0$.) It is known that
\begin{equation}\label{GgL1}
\sup_t\big\|G_-(t)g\big\|_{L^1(\Omega\times \mathbb{R}^3)}\leq 
\|g\|_{L^1(\mathbb{R}_+\times\Gamma_{-},\,|n(x)\cdot v|\ud t\ud\sigma(x)\ud v)}
\end{equation}
where $d\sigma$ is the standard surface measure on $\partial\Omega$.
(See e.g. (5.5) in \cite{choulli1996inverse}.)
By Duhamel's principle, we know that
the solution of
\begin{equation}\label{ibvpL2g}
\begin{cases}\;\partial_t h+ v\cdot \nabla_x h+ L_2 h= 0 \quad& \mathbb{R}_+\times\Omega\times\mathbb{R}^{3}\\ 
\;h= g & \mathbb{R}_+\times\Gamma_{-}\\
\;h(0, x, v)= 0 & \Omega\times\mathbb{R}^{3}\end{cases}
\end{equation}
is given by the formula
% $$h= G_-(t)g+ \int^t_0 U_2(s)KG_-(t-s)g\,ds$$
% \begin{equation}\label{soldecomp}
% = G_-(t)g+ \int^t_0 U_1(s)KG_-(t-s)g\,ds
% + \int^t_0\int^{s_2}_0 U_2(t-s_2)KU_1(s_1)KG_-(s_2-s_1)g\,ds_1ds_2.
% \end{equation}
\begin{align}
    h&= G_-(t)g+ \int^t_0 U_2(s)KG_-(t-s)g\,\ud s \label{soldecomp}\\
    &= G_-(t)g+ \int^t_0 U_1(s)KG_-(t-s)g\,\ud s + \int^t_0\int^{s_2}_0 U_2(t-s_2)KU_1(s_1)KG_-(s_2-s_1)g\,\ud s_1\ud s_2.\no
\end{align}

\subsection{Boltzmann equation near the equilibrium} \label{SubSec:Boltzmann}
All materials in this subsection can be found in Chapter 3 in \cite{glassey1996cauchy}.

By making the substitutions
\begin{equation}\label{FGfg}
F= \mu+ \mu^{\frac{1}{2}}f,\qquad G= \mu+ \mu^{\frac{1}{2}}g,
\end{equation}
we can write (\ref{ibvp}) as 
\begin{equation}\label{ibvpfg}
\begin{cases}\;\partial_t f+ v\cdot \nabla_x f+ Lf= \Gamma(f, f) \quad& \mathbb{R}_+\times\Omega\times\mathbb{R}^{3}\\ 
\;f=g & \mathbb{R}_+\times\Gamma_{-}\\
\;f(0, x, v)= 0 & \Omega\times\mathbb{R}^{3},\end{cases}
\end{equation}
where
\begin{equation}\label{muQ}
\Gamma(f, f)= \mu^{-\frac{1}{2}}Q(\mu^{\frac{1}{2}}f, \mu^{\frac{1}{2}}f),
\end{equation}
and the linearized Boltzmann operator $L$ has the form
$L= \nu - K$. 
Here the function $\nu(v)$ is the collision frequency and $K$ has the form $K= K_2- K_1$ where $K_1, K_2$ are
the linear integral operators corresponding to the kernels $k_1, k_2$. 
It is known that 
\begin{equation}\label{nu}
\nu(v)= \int_{\mathbb{R}^3}\int_{\mathbb{S}^2} q(\theta, |v-u|)\mu(u)\,\ud\omega \ud u,
\end{equation}
\begin{equation}\label{k1}
k_1(u, v)= \mu^{\frac{1}{2}}(u)\mu^{\frac{1}{2}}(v)\int_{\mathbb{S}^2} q(\theta, |v-u|)\,\ud\omega,
\end{equation}
\begin{equation}\label{k2}
k_2(u, v)= \frac{2}{|u- v|^2}\ue^{-\frac{|u- v|^2}{4}}\int_{y\in \Pi}\ue^{-|y+\zeta|^2}\tilde{q}(|u-v|, |y|) \,\ud\Pi
\end{equation}
where 
\begin{equation}\label{zetaPi}
\zeta= \frac{1}{2}(v+ u),\qquad \Pi= \big\{y: y\cdot(u- v)= 0\big\}
\end{equation}
and the function $\tilde{q}(\cdot, \cdot)$ is defined by
\begin{equation}\label{tildeq}
\tilde{q}(\rho\cos\theta, \rho\sin\theta):=
\frac{\tilde{B}(\theta, \rho)}{\sin\theta},\quad
\tilde{B}(\theta, \rho)= \frac{1}{2}\big[B(\theta, \rho)+
B(\tfrac{\pi}{2}-\theta, \rho)\big],\quad B(\theta, \rho)= 
q(\theta, \rho)\sin\theta.
\end{equation}

\smallskip
\section{Well-posedness and linearization} \label{Sec:WPL}
\subsection{Well-posedness} \label{SubSec:WP}

In order to establish well-posedness for the forward problem via the $L^\infty$ framework,
we introduce a weight function which has the form
\begin{equation}\label{weight}
w(v)= (1+ c|v|^2)^m,\qquad c>0,\,\,m\in \mathbb{R}
\end{equation}
satisfying $w^{-2}(1+ |v|)^3\in L^1(\mathbb{R}^3)$ (see Subsection 1.3 in \cite{Guo2010}).
By making the substitutions
$$\tilde{f}= wf,\qquad \tilde{g}= wg$$ 
in (\ref{ibvpfg}), we can further write (\ref{ibvp}) as 
\begin{equation}\label{ibvptildefg}
\begin{cases}\;\partial_t \tilde{f}+ v\cdot \nabla_x \tilde{f}+ \tilde{L}\tilde{f}= \tilde{\Gamma}(\tilde{f}, \tilde{f}) \quad& \mathbb{R}_+\times\Omega\times\mathbb{R}^{3}\\ 
\;\tilde{f}= \tilde{g} & \mathbb{R}_+\times\Gamma_{-}\\
\;\tilde{f}(0, x, v)= 0 & \Omega\times\mathbb{R}^{3}\end{cases}
\end{equation}
where
\begin{equation}\label{tildeL}
\tilde{L}:= \nu- \tilde{K},\qquad \tilde{K}:= wK(\tfrac{\cdot}{w}),
\end{equation}
\begin{equation}\label{tildeGamma}
\tilde{\Gamma}(\cdot, \cdot):= w\Gamma(\tfrac{\cdot}{w}, \tfrac{\cdot}{w}).
\end{equation}
Based on \cite[Theorem~1]{Guo2010}, we have the following well-posedness result for (\ref{ibvptildefg}).

\begin{prop}\label{Prop:WP}
For $\tilde{g}$ with sufficiently small $L^\infty$-norm, (\ref{ibvptildefg}) has a unique solution $\tilde{f}$ and
\begin{equation}\label{Linftyfg}
\|\tilde{f}\|_{L^\infty}\leq C\|\tilde{g}\|_{L^\infty}.
\end{equation}
%\red{
Moreover, if $\tilde{g}$ is continuous on $[0, \infty)\times\Gamma_{-}$,
then $\tilde{f}$ is continuous in $[0, \infty)\times\big\{(\bar{\Omega}\times\mathbb{R}^3)\setminus\Gamma_0\big\}$,%}
where 
\begin{equation}\label{Gamma0}
\Gamma_0:= \big\{(x, v)\in \partial\Omega\times\mathbb{R}^3: n(x)\cdot v= 0 \big\}.
\end{equation}
\end{prop}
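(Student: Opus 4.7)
The plan is to imitate the $L^\infty$ fixed-point argument from \cite{Guo2010}, now adapted to the in-flow boundary data and initial state $\tilde{f}(0,\cdot,\cdot)=0$. First I would rewrite (\ref{ibvptildefg}) in mild form along the backward characteristics $s\mapsto x-(t-s)v$: using the exit time $\tau_-(x,v)$ from (\ref{exit}) together with the representations of $G_-(t)$ and $U_1(t)$ recalled in Section~\ref{Sec:Prelim}, any solution must satisfy
\begin{equation*}
\tilde{f}(t,x,v)= \bigl[G_-(t)\tilde{g}\bigr](x,v)
+ \int_0^{\min\{t,\tau_-(x,v)\}} \ue^{-\nu(v)s}\bigl[\tilde{K}\tilde{f}+ \tilde\Gamma(\tilde{f},\tilde{f})\bigr](t-s,x-sv,v)\,\ud s.
\end{equation*}
The right-hand side defines a map $\mathcal{T}$; I would show it is a contraction on a small $L^\infty$ ball $B_\delta:=\{\tilde{f}: \|\tilde{f}\|_{L^\infty}\le \delta\}$ provided $\|\tilde{g}\|_{L^\infty}$ is sufficiently small.

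The two key ingredients are weighted bounds on $\tilde K$ and $\tilde\Gamma$. Because the weight $w(v)=(1+c|v|^2)^m$ is chosen to satisfy $w^{-2}(1+|v|)^3\in L^1(\r^3)$, the classical estimates of Grad translate into
\begin{equation*}
\sup_v \frac{1}{\nu(v)}\int_{\r^3} \tilde{k}(u,v)\,\ud u \le C,\qquad
|\tilde\Gamma(\tilde{f}_1,\tilde{f}_2)(v)|\le C\,\nu(v)\,\|\tilde{f}_1\|_{L^\infty}\|\tilde{f}_2\|_{L^\infty},
\end{equation*}
where $\tilde k$ is the kernel of $\tilde K$. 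Combining with $\int_0^{\tau_-}\ue^{-\nu(v)s}\,\ud s\le \nu(v)^{-1}$, these yield
\begin{equation*}
\|\mathcal{T}\tilde{f}\|_{L^\infty}\le \|\tilde{g}\|_{L^\infty}+ C_1\,\|\tilde{f}\|_{L^\infty}+ C_2\,\|\tilde{f}\|_{L^\infty}^2.
\end{equation*}
The linear term with constant $C_1$ is the delicate one: it is not automatically small. This is the main obstacle, and it is precisely what \cite{Guo2010} resolves by iterating the mild formula once more and exploiting an almost-compactness / averaging lemma that gains a factor of $\delta$ (or by the double Duhamel identity analogous to (\ref{soldecomp})). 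I would invoke that gain verbatim to absorb $C_1\|\tilde{f}\|_{L^\infty}$ into a term of size $o(1)\|\tilde{f}\|_{L^\infty}$ on $B_\delta$, which then allows the standard contraction estimate to close for $\|\tilde{g}\|_{L^\infty}$ small enough, yielding both existence/uniqueness and the bound (\ref{Linftyfg}).

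For the continuity statement, I would note that $\tau_-(x,v)$ and the characteristic map $(t,x,v)\mapsto (t-\tau_-, x-\tau_- v, v)$ are continuous on $\{(\bar\Omega\times\r^3)\setminus \Gamma_0\}$ by strict convexity of $\Omega$. Hence if $\tilde g$ is continuous on $[0,\infty)\times\Gamma_-$, then $G_-(t)\tilde g$ is continuous off $\Gamma_0$. The Duhamel integrand above is bounded by an $L^1_s$ function thanks to $\ue^{-\nu(v)s}$ and the estimates just described, so dominated convergence transfers continuity of $\tilde f$ on each Picard iterate. Since the iterates converge in $L^\infty$, the limit $\tilde f$ inherits continuity off $\Gamma_0$, completing the proof of Proposition~\ref{Prop:WP}.
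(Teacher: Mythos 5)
You should first be aware that the paper does not actually prove Proposition~\ref{Prop:WP}: it is quoted directly from \cite[Theorem~1]{Guo2010}, with only the remark that the $L^\infty$ estimate is ``an adaption \ldots without time decay'' and that the continuity statement uses strict convexity. So the comparison here is really with Guo's argument. Your skeleton matches it in outline: the mild formulation along backward characteristics with the damping $\ue^{-\nu(v)s}$, the weighted bounds $\int\tilde k(u,v)\,\ud u\le C$ and $|\tilde\Gamma(\tilde f_1,\tilde f_2)|\le C\nu(v)\|\tilde f_1\|_{L^\infty}\|\tilde f_2\|_{L^\infty}$, a fixed-point argument on a small ball, and continuity off the grazing set $\Gamma_0$ propagated through the Picard iterates using continuity of $\tau_-$ on $(\bar\Omega\times\mathbb{R}^3)\setminus\Gamma_0$ (this last part is fine).

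The genuine gap is in your treatment of the $O(1)$ linear term $C_1\|\tilde f\|_{L^\infty}$. Iterating the Duhamel formula once more and using an ``averaging gain'' does \emph{not} absorb this term into $o(1)\|\tilde f\|_{L^\infty}$, and the contraction cannot close that way. What the double iteration actually does in \cite{Guo2010} is split the twice-iterated $\tilde K$ contribution into (i) pieces made small by cutting off large velocities, near-grazing times and the kernel singularity, which are indeed $o(1)\sup_s\|\tilde f(s)\|_{L^\infty}$, and (ii) a main piece in which the velocity integral is converted, via the change of variables $v'\mapsto y=x-(t-s)v-(s-s')v'$, into a spatial integral bounded by $\int_0^t\|\tilde f(s)\|_{L^2(\Omega\times\mathbb{R}^3)}\,\ud s$. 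That $L^2$ quantity is not small relative to $\|\tilde f\|_{L^\infty}$; it must be controlled by a separate $L^2$ energy estimate, which uses the coercivity of $L$ on $(\ker L)^\perp$ together with the analysis of the macroscopic (hydrodynamic) part $\mathbf{P}\tilde f$ and the boundary terms $\int_{\Gamma_\pm}|\cdot|^2|n\cdot v|$. This $L^\infty$--$L^2$ bootstrap is the actual content of Guo's Theorem~1, and it is also where the adaptation from initial data to in-flow boundary data $\tilde g$ has to be carried out; your sketch omits it entirely. If you intend to black-box this step, you are in effect doing exactly what the paper does (citing \cite{Guo2010}), but the mechanism you describe in its place is not one that would work if written out.
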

We remark that 
the $L^\infty$ estimate above is an adaption from \cite[Theorem~1]{Guo2010} without time decay.
% (Make sense of the boundary restriction map using Ukai's trace theorem, then no need ``convex domain'' for continuity for inflow-BC)
Also, this continuity result for the in-flow boundary requires a strictly convex domain.
Alternatively, if we make sense of the boundary restriction map using Ukai's trace theorem (see \cite[Theorem~5.5.1]{ukai1986boltzmann}), then we may also work with non-convex domains and $L^p$ boundary data.

Hence we know that the associated Albedo operator
\begin{equation}\label{Aloptilde}
A: \tilde{g}\to \tilde{f}|_{\mathbb{R}_+\times\Gamma_{+}}
\end{equation}
is at least 
well-defined for small continuous $\tilde{g}$.

Clearly the knowledge of $A$ is equivalent to the knowledge of $\mathcal{A}$
defined by (\ref{Alop}).

\subsection{Linearization}
Let $\tilde{f_\epsilon}$ be the solution of 
\begin{equation}\label{smallibvp}
\begin{cases}\;\partial_t \tilde{f}+ v\cdot \nabla_x \tilde{f}+ \tilde{L}\tilde{f}= \tilde{\Gamma}(\tilde{f}, \tilde{f}) \quad& \mathbb{R}_+\times\Omega\times\mathbb{R}^{3}\\ 
\;\tilde{f}= \epsilon\tilde{g} & \mathbb{R}_+\times\Gamma_{-}\\
\;\tilde{f}(0, x, v)= 0 & \Omega\times\mathbb{R}^{3}\end{cases}
\end{equation}
for small $\epsilon$ and continuous $\tilde{g}$. Consider the linear problem
\begin{equation}\label{linearibvp}
\begin{cases}\;\partial_t h+ v\cdot \nabla_x h+ \tilde{L}h= 0 \quad& \mathbb{R}_+\times\Omega\times\mathbb{R}^{3}\\ 
\;h= \tilde{g} & \mathbb{R}_+\times\Gamma_{-}\\
\;h(0, x, v)= 0 & \Omega\times\mathbb{R}^{3}.\end{cases}
\end{equation}

\begin{prop}\label{Prop:Linearization}
$\frac{\tilde{f_\epsilon}}{\epsilon}\to h$ 
in $L^\infty$-norm as $\epsilon\to 0$.
\end{prop}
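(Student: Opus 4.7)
The plan is the standard residual-error argument for first-order linearization. First I would introduce the difference
$$r_\epsilon:= \frac{\tilde{f}_\epsilon}{\epsilon}-h,$$
and observe that, since $\tilde{\Gamma}$ is bilinear, dividing the nonlinear equation in (\ref{smallibvp}) by $\epsilon$ gives
\begin{equation*}
\partial_t \tfrac{\tilde{f}_\epsilon}{\epsilon}+ v\cdot\nabla_x \tfrac{\tilde{f}_\epsilon}{\epsilon}+ \tilde{L}\tfrac{\tilde{f}_\epsilon}{\epsilon}= \epsilon\,\tilde{\Gamma}\!\left(\tfrac{\tilde{f}_\epsilon}{\epsilon},\tfrac{\tilde{f}_\epsilon}{\epsilon}\right),
\end{equation*}
with boundary data $\tilde{g}$ on $\Gamma_-$ and zero initial data. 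Subtracting (\ref{linearibvp}), $r_\epsilon$ therefore solves the same linear IBVP as $h$, but with zero boundary and initial data and with forcing
$S_\epsilon:= \epsilon\,\tilde{\Gamma}(\tilde{f}_\epsilon/\epsilon,\tilde{f}_\epsilon/\epsilon)$.

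The second step is to obtain a uniform-in-$\epsilon$ $L^\infty$ bound on $\tilde{f}_\epsilon/\epsilon$, and thence on $S_\epsilon$. Proposition~\ref{Prop:WP}, applied to the boundary data $\epsilon\tilde{g}$, immediately yields
$$\|\tilde{f}_\epsilon/\epsilon\|_{L^\infty}\leq C\|\tilde{g}\|_{L^\infty}$$
for all sufficiently small $\epsilon$. Combined with the bilinear bound $\|\tilde{\Gamma}(\tilde{f}_1,\tilde{f}_2)\|_{L^\infty}\lesssim \|\tilde{f}_1\|_{L^\infty}\|\tilde{f}_2\|_{L^\infty}$, which is one of the structural estimates underlying \cite[Theorem~1]{Guo2010} for the weight (\ref{weight}) in the hard potential case, this gives
$$\|S_\epsilon\|_{L^\infty}\leq C\epsilon\|\tilde{g}\|_{L^\infty}^2.$$

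The third step is a linear $L^\infty$ estimate of the form
$$\|r_\epsilon\|_{L^\infty}\lesssim \|S_\epsilon\|_{L^\infty}$$
for the IBVP with zero boundary/initial data and source $S_\epsilon$. This follows from the same Duhamel/semigroup machinery recalled in Subsection~\ref{SubSec:Transport}: write $r_\epsilon$ by Duhamel's formula in terms of the semigroup associated with $\partial_t+v\cdot\nabla_x+\nu$ iterated against $\tilde{K}$, exactly as in (\ref{soldecomp}), and bound $\tilde{K}$ as a bounded operator on $L^\infty$ with the weight (\ref{weight}). Combining the two preceding bounds yields
$$\|r_\epsilon\|_{L^\infty}\leq C\epsilon\|\tilde{g}\|_{L^\infty}^2\longrightarrow 0\quad\text{as }\epsilon\to 0,$$
which is the claim.

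The main obstacle is the third step: one needs the $L^\infty$ well-posedness of the linear forced IBVP with the weighted operator $\tilde L=\nu-\tilde K$, and, crucially, an estimate independent of $\epsilon$. Because $\tilde L$ is exactly the linearized operator already analyzed in Guo's framework, this is not a new result but a careful bookkeeping exercise: one checks that the same Duhamel identities and compactness-of-$\tilde K$ arguments used to prove Proposition~\ref{Prop:WP} deliver the linear source-to-solution estimate, with no need for smallness of the forcing (since the equation is linear in $r_\epsilon$). The bilinear bound on $\tilde{\Gamma}$ and the boundedness of $\tilde{K}$ in the weighted $L^\infty$ space are the two analytic ingredients that make the proposal go through.
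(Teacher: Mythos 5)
Your overall architecture coincides with the paper's: set $r_\epsilon=\tilde f_\epsilon/\epsilon-h$, observe that it solves the linear problem with zero initial and boundary data and source $\frac{1}{\epsilon}\tilde\Gamma(\tilde f_\epsilon,\tilde f_\epsilon)$, represent it by Duhamel's formula against the semigroup of $\partial_t+v\cdot\nabla_x+\tilde L$ (iterating $U_1$ against $\tilde K$ as in Subsection~\ref{SubSec:Transport}), and close using the quadratic gain together with the uniform bound $\|\tilde f_\epsilon\|_{L^\infty}\le C\epsilon\|\tilde g\|_{L^\infty}$ from Proposition~\ref{Prop:WP}. This is exactly the paper's route, which writes the Duhamel representation (\ref{hepsilon}) and invokes Guo's estimates (233) and (237) from \cite{Guo2010} to bound the two resulting integrals by $C\epsilon^{-1}\|\tilde f_\epsilon\|_{L^\infty}^2$.

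The one step that does not survive scrutiny as written is your clean factorization into ``bound the source in $L^\infty$'' followed by ``apply a linear source-to-solution $L^\infty$ estimate.'' For hard potentials ($0<\gamma\le 1$ in (\ref{qform})) the collision frequency $\nu(v)\sim(1+|v|)^\gamma$ is unbounded, and the correct structural estimate in Guo's framework is the weighted one, $|\tilde\Gamma(\tilde f_1,\tilde f_2)(v)|\lesssim \nu(v)\,\|\tilde f_1\|_{L^\infty}\|\tilde f_2\|_{L^\infty}$; the unweighted bilinear bound you assert is false in general, so $S_\epsilon$ is not uniformly bounded in $v$ in the sense your third step requires. The standard remedy --- and what the paper actually does --- is not to bound the source alone but to bound the Duhamel integral of the source directly: the factor $\ue^{-\nu(v)(t-s)}$ carried by $U_1(t-s)$ absorbs the $\nu(v)$ growth through $\int_0^t \ue^{-\nu(v)(t-s)}\nu(v)\,\ud s\le 1$, which is precisely the content of Guo's (233) and (237) and yields (\ref{hepsilonint1}) and (\ref{hepsilonint2}). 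With that repair your argument becomes the paper's proof; the remaining steps (the uniform bound on $\tilde f_\epsilon/\epsilon$ and the conclusion $\|r_\epsilon\|_{L^\infty}\le C\epsilon\|\tilde g\|_{L^\infty}^2\to 0$) are correct as you state them.
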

\begin{proof}
Let $h_\epsilon:= \frac{\tilde{f_\epsilon}}{\epsilon}- h$.
Note that we have $h_\epsilon(0)= 0,\, h_\epsilon|_{\mathbb{R}_+\times\Gamma_{-}}= 0$
and
$$\partial_t h_\epsilon+ v\cdot \nabla_x h_\epsilon+ \tilde{L}h_\epsilon= \frac{1}{\epsilon}\tilde{\Gamma}(\tilde{f_\epsilon}, \tilde{f_\epsilon}).$$

We will show that $h_\epsilon\to 0$ in $L^\infty$-norm as $\epsilon\to 0$. 

Let $\tilde{U}(t): h_0\to h$ ($j= 0, 1, 2$) denote the semigroup associated with
\begin{equation}\label{tildesemigp}
\begin{cases}\;\partial_t h+ v\cdot \nabla_x h+ \tilde{L} h= 0 \quad& \mathbb{R}_+\times\Omega\times\mathbb{R}^{3}\\ 
\;h= 0 & \mathbb{R}_+\times\Gamma_{-}\\
\;h(0, x, v)= h_0 & \Omega\times\mathbb{R}^{3}.\end{cases}
\end{equation}
Let $\tilde{k}$ be the kernel corresponding to the linear integral operator 
$\tilde{K}$ in (\ref{tildeL}).
Based on the estimates (44), (45) in \cite[Lemma~3]{Guo2010} (these stronger estimates are mainly used to prove the weighted $L^\infty$ bounds of solutions), 
we have
\begin{equation}\label{tildekbdd}
\int_{\r^3}\tilde{k}(v, v')\,\ud v\leq C,
\end{equation}
% \red{(weaker assumption compared to Lem3?)}
which implies $\tilde{K}$ is bounded on $L^1(\Omega\times \mathbb{R}^3)$. 
Based on results in Subsection~\ref{SubSec:Transport} and Duhamel's principle, we know that
$\tilde{U}(t)$ is strongly continuous on $L^1(\Omega\times \mathbb{R}^3)$ 
and we have
% $$h_\epsilon(t)=\int_0^t\tilde{U}(t-s)\frac{1}{\epsilon}\tilde{\Gamma}(\tilde{f_\epsilon}, \tilde{f_\epsilon})(s)\,ds$$
% \begin{equation}\label{hepsilon}
% =\int_0^tU_1(t-s)\frac{1}{\epsilon}\tilde{\Gamma}(\tilde{f_\epsilon}, \tilde{f_\epsilon})(s)\,ds+
% \int_0^t\int_s^tU_1(t-s')\tilde{K}\tilde{U}(s'-s)\frac{1}{\epsilon}\tilde{\Gamma}(\tilde{f_\epsilon}, \tilde{f_\epsilon})(s)\,ds'ds.
% \end{equation}
\begin{align}
    h_\epsilon(t)&=\int_0^t\tilde{U}(t-s)\frac{1}{\epsilon}\tilde{\Gamma}(\tilde{f_\epsilon}, \tilde{f_\epsilon})(s)\,\ud s\label{hepsilon}\\
    &=\int_0^tU_1(t-s)\frac{1}{\epsilon}\tilde{\Gamma}(\tilde{f_\epsilon}, \tilde{f_\epsilon})(s)\,\ud s+
    \int_0^t\int_s^tU_1(t-s')\tilde{K}\tilde{U}(s'-s)\frac{1}{\epsilon}\tilde{\Gamma}(\tilde{f_\epsilon}, \tilde{f_\epsilon})(s)\,\ud s'\ud s.\no
\end{align}
Based on the estimate (233) in \cite{Guo2010}, we have 
\begin{equation}\label{hepsilonint1}
\abs{\int_0^tU_1(t-s)\frac{1}{\epsilon}\tilde{\Gamma}(\tilde{f_\epsilon}, \tilde{f_\epsilon})(s) \,\ud s}\leq C\epsilon^{-1}\|\tilde{f_\epsilon}\|^2_{L^\infty}.
\end{equation}
Based on the estimate (237) in \cite{Guo2010}, we have 
\begin{equation}\label{hepsilonint2}
\abs{\int_0^t\int_s^tU_1(t-s')\tilde{K}\tilde{U}(s'-s)\frac{1}{\epsilon}\tilde{\Gamma}(\tilde{f_\epsilon}, \tilde{f_\epsilon})(s)\,\ud s'\ud s}\leq C'\epsilon^{-1}\|\tilde{f_\epsilon}\|^2_{L^\infty}.
\end{equation}
Hence by (\ref{hepsilonint1}), (\ref{hepsilonint2}) and (\ref{Linftyfg}) we have
$$\|h_\epsilon\|_{L^\infty}\leq C''\epsilon^{-1}\|\tilde{f_\epsilon}\|^2_{L^\infty}
\leq C'''\epsilon\|\tilde{g}\|^2_{L^\infty}.$$
\end{proof}

Now we consider 
the Albedo operator
\begin{equation}\label{linAlop}
A^{lin}: \tilde{g}\to h|_{\mathbb{R}_+\times\Gamma_{+}}
\end{equation}
associated with the linear problem (\ref{linearibvp}). 

We take the restriction to $\mathbb{R}_+\times\Gamma_{-}$ in Proposition~\ref{Prop:Linearization} to obtain that
\begin{equation}\label{AtoAlin}
\frac{1}{\epsilon}A(\epsilon\tilde{g})\to A^{lin}\tilde{g}
\end{equation}
in $L^\infty$-norm as $\epsilon\to 0$ for continuous $\tilde{g}$ compactly supported in $\mathbb{R}_+\times\Gamma_{-}$, which implies that
$A^{lin}$ is determined by $A$.

\smallskip
\section{Inverse problem} \label{Sec:IP}
\subsection{Determine the collision frequency and K}
Based on the formula (\ref{soldecomp}), we have the following singular decomposition result for the Albedo operator $A^{lin}$ associated with the linear problem (\ref{linearibvp}). See \cite[Theorem~5.1]{choulli1996inverse}.

\begin{prop}
The Schwartz kernel of $A^{lin}$ has the form $\alpha(t-t',x,v,x',v')$,
i.e. formally 
$$(A^{lin}\tilde{g})(t,x,v)= \iint_{\Gamma_-}\int_{\r_+}\alpha(t-t',x,v,x',v')\tilde{g}(t',x',v')\,\ud t'\ud\sigma(x')\ud v'.$$
We have the decomposition $\alpha= \alpha_0+ \alpha_1+ \alpha_2$ where
$$\alpha_j= \alpha_j(\tau,x,v,x',v'),\qquad (x,v)\in \Gamma_+,\,\,
(x', v')\in \Gamma_-,$$
$$\alpha_0= \ue^{-\nu(v)\tau_-(x,v)}\delta_{\{x-\tau_-(x,v)v\}}(x')\delta(v-v')
\delta_1(\tau-\tau_-(x,v)),$$
$$\alpha_1= \int^{\tau_-(x,v)}_0 \ue^{-\nu(v)s}\ue^{-\nu(v')\tau_-(x-sv,v')}\delta_1(\tau-s-\tau_-(x-sv,v'))
\tilde{k}(v, v')\delta_{\{x-sv-\tau_-(x-sv,v')v'\}}(x')\,\ud s,$$
and $\alpha_2$ satisfies
$$|n(x')\cdot v'|^{-1}\alpha_2\in L^\infty(\Gamma_-; L^1_{\mathrm{loc}}(\mathbb{R}; L^1(\Gamma_+, |n(x)\cdot v|\,\ud\sigma(x)\ud v))).$$
Here we use $\delta, \delta_1$ to denote the standard Dirac distribution on $\mathbb{R}^3, \mathbb{R}$. For $y\in \partial\Omega$, $\delta_y$ is the distribution defined by $\langle\delta_y, \varphi\rangle:= \varphi(y)$
for $\varphi$ defined on $\partial\Omega$.
\end{prop}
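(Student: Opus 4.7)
The plan is to exploit the Duhamel decomposition (\ref{soldecomp}), applied with $L_2=\tilde L$, and then restrict the resulting solution $h$ of (\ref{linearibvp}) to $\mathbb{R}_+\times\Gamma_+$. Each of the three summands on the right of (\ref{soldecomp}) will be rewritten as an integral operator acting on $\tilde g$, and reading off its integral kernel will produce $\alpha_0$, $\alpha_1$, and $\alpha_2$, respectively.

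For $\alpha_0$ I would take the explicit formula for $G_-(t)\tilde g$ from Subsection~\ref{SubSec:Transport} and rewrite it as an integral against $\tilde g$ over $\mathbb{R}_+\times\Gamma_-$ with measure $\ud t'\,\ud\sigma(x')\,\ud v'$. The three matching constraints ($x'=x-\tau_-(x,v)v$, $v'=v$, and $t'=t-\tau_-(x,v)$) become the three Dirac factors displayed for $\alpha_0$, and the attenuation $\ue^{-\nu(v)\tau_-(x,v)}$ appears as the prefactor. For $\alpha_1$ I would then analyze the middle term $\int_0^t U_1(s)\tilde K G_-(t-s)\tilde g\,\ud s$. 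Using the closed forms of $U_1$ and $G_-$ together with the kernel $\tilde k(v,v')$ of $\tilde K$, the composition $G_-(t-s)$ emits a backward trajectory from the putative scattering site $x-sv$ at time $t-s$, meeting $\partial\Omega$ at $x-sv-\tau_-(x-sv,v')v'$; after the change of variables $\tau=t-t'$ the integrand reproduces $\alpha_1$, with $\ue^{-\nu(v)s}$ encoding attenuation from the scattering site to $(x,v)\in\Gamma_+$ and $\ue^{-\nu(v')\tau_-(x-sv,v')}$ the attenuation from the boundary to the scattering site.

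The remaining term produces $\alpha_2$, and rather than computing it explicitly I would establish the claimed $L^1$ regularity via a Schur-type argument. By (\ref{GgL1}) the operator $G_-$ maps $L^1(\mathbb{R}_+\times\Gamma_-, |n(x)\cdot v|\,\ud t\,\ud\sigma\,\ud v)$ boundedly into $L^\infty_t L^1_{x,v}(\Omega\times\mathbb{R}^3)$; the bound (\ref{tildekbdd}) together with the symmetry of $\tilde k$ makes $\tilde K$ bounded on $L^1(\Omega\times\mathbb{R}^3)$; and the semigroups $U_1$, $U_2$ are strongly continuous (and contractive) on the same space. Composing these ingredients delivers a bounded map from boundary data into $L^1_{\mathrm{loc}}(\mathbb{R}; L^1(\Omega\times\mathbb{R}^3))$, and taking the trace on $\Gamma_+$ with measure $|n(x)\cdot v|\,\ud\sigma\,\ud v$ yields the claimed membership of $|n(x')\cdot v'|^{-1}\alpha_2$.

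The main obstacle I anticipate is precisely this trace step for $\alpha_2$: an interior $L^1$ bound does not by itself produce a well-defined restriction to $\Gamma_+$. The resolution, as in \cite{choulli1996inverse}, is to exploit the smoothing effect coming from the twofold appearance of $\tilde K$ in the third summand of (\ref{soldecomp}); the composition $\tilde K U_1(s_1)\tilde K$ is regularizing enough (a velocity-averaging type effect) to yield a well-defined trace. Carefully tracking the weights $|n(x)\cdot v|$ on $\Gamma_+$ and $|n(x')\cdot v'|^{-1}$ on $\Gamma_-$ through each composition, and checking that none of the singular factors from the first two terms leak into $\alpha_2$, will be the most delicate bookkeeping in the argument.
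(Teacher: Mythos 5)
Your proposal is correct and follows essentially the route the paper intends: the paper offers no proof of its own here, simply citing \cite[Theorem~5.1]{choulli1996inverse} and pointing to the Duhamel expansion (\ref{soldecomp}), whose three terms are read off as the kernels $\alpha_0$, $\alpha_1$, $\alpha_2$ exactly as you describe. The only minor imprecision is in your treatment of $\alpha_2$: the twofold application of $\tilde{K}$ serves to turn the singular measure into an honest $L^1$ density (since $\tilde{K}$ integrates in velocity), after which the outgoing trace bound in $L^1(\Gamma_+, |n(x)\cdot v|\,\ud\sigma\ud v)$ comes from the standard Green's identity for the transport equation rather than from a velocity-averaging effect.
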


We remark that $\alpha_0, \alpha_1$ are singular distributions while $\alpha_2$ is a function. $\alpha_0$ is a Dirac type distribution, which is supported at a point for fixed $(x, v)$. $\alpha_1$ is a Dirac type distribution as well but it is less singular than $\alpha_0$. For fixed $(x, v, v')$, the support of $\alpha_1$ is contained in the set $\{(x', \tau)\}$ where $x'$ belongs to the intersection curve of $\partial\Omega$ with the plane passing through $x$ and parallel to $v, v'$, and $\tau$ is the travel time from $x$ to $x'$. 

Based on the decomposition theorem above, we can use exactly the same method presented in \cite{choulli1996inverse} to obtain the following uniqueness result for the linear problem (\ref{linearibvp}). Recall that
$\nu, K$ are defined in Subsection~\ref{SubSec:Boltzmann}. The knowledge of $K$ is equivalent to the knowledge of $\tilde{K}$ and $\nu, \tilde{K}$ are parameters appearing in the linear problem (\ref{linearibvp}) (see (\ref{tildeL}) in Subsection~\ref{SubSec:WP}).

\begin{prop}\label{Prop:IP-1}
Both $\nu$ and $K$ are uniquely determined by $A^{lin}$.
\end{prop}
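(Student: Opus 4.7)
The plan is to exploit the decomposition $\alpha=\alpha_0+\alpha_1+\alpha_2$ just stated, following the strategy of \cite{choulli1996inverse} verbatim. The three pieces are distinguished by their singular supports: $\alpha_0$ is a Dirac mass concentrated at a single point of $\Gamma_-\times\mathbb{R}_+$, $\alpha_1$ is a distribution whose support is a one-dimensional curve in the $(x',\tau)$-variables (with $v'$ free), and $\alpha_2$ is a locally integrable function. Hence $A^{lin}$ determines $\alpha_0$, $\alpha_1$, $\alpha_2$ separately as distributions on $\mathbb{R}_+\times\Gamma_-$, and the task reduces to reading off $\nu$ from $\alpha_0$ and $\tilde{k}$ from $\alpha_1$.

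First I would recover the collision frequency. Fix $(x,v)\in\Gamma_+$, let $x^\ast=x-\tau_-(x,v)v\in\partial\Omega$ and test $\alpha_0$ against a family of smooth cutoffs concentrated near the point $(\tau_-(x,v),x^\ast,v)\in\mathbb{R}_+\times\Gamma_-$. Since $\Omega$ is known and strictly convex, the exit-time function $\tau_-$ and the point $x^\ast$ are known a priori, so the scalar coefficient $\ue^{-\nu(v)\tau_-(x,v)}$ is determined by $A^{lin}$. Taking logarithms gives $\nu(v)$ once we fix any $(x,v)\in\Gamma_+$ with $\tau_-(x,v)>0$; and for every $v\in\mathbb{R}^3\setminus\{0\}$ we can find some $x\in\partial\Omega$ with $n(x)\cdot v>0$ and $\tau_-(x,v)>0$, so $\nu$ is recovered pointwise a.e. By continuity of $\nu$, this determines $\nu$ everywhere.

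Having recovered $\nu$, I would next use $\alpha_1$ to recover $\tilde{k}$. With $\nu$ known, the exponentials $\ue^{-\nu(v)s}\ue^{-\nu(v')\tau_-(x-sv,v')}$ appearing inside the integrand of $\alpha_1$ are explicit functions of the geometric parameters. For any two non-parallel velocities $v,v'$, the map $s\mapsto x-sv-\tau_-(x-sv,v')v'$ traces out a curve on $\partial\Omega$, and the distribution $\alpha_1$, tested against a function supported in a small neighborhood of a given image point and pushed forward through this change of variables, isolates the value of $\tilde{k}(v,v')$ multiplied by known factors. Varying $(x,v,v')$ one recovers $\tilde{k}(v,v')$ for a.e.\ $(v,v')\in\mathbb{R}^3\times\mathbb{R}^3$; since $\tilde{k}(v,v')=w(v')\,k(v,v')/w(v)$ with $w$ given explicitly, this determines $k$ and hence the operator $K$.

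The part I expect to be most delicate is the distributional separation of $\alpha_0$ and $\alpha_1$ from the $L^1$ remainder $\alpha_2$, and in particular the rigorous implementation of the change-of-variables argument that extracts pointwise values of $\tilde{k}$ from $\alpha_1$ when $v$ and $v'$ are nearly parallel (so that the associated geometric Jacobians degenerate). However, both of these steps are carried out in detail in \cite[Sec.~5]{choulli1996inverse} for exactly the same linear transport problem (\ref{linearibvp}), with $\nu$ and $\tilde{k}$ playing the roles of absorption coefficient and scattering kernel; since we have already checked, via (\ref{tildekbdd}) and Proposition~\ref{Prop:WP}, that the hypotheses of \cite{choulli1996inverse} hold for our $\nu,\tilde{K}$, the argument there applies directly and yields the claimed uniqueness.
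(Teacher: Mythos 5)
Your proposal is correct and follows essentially the same route as the paper: both recover $\nu$ by testing the most singular part $\alpha_0$ against concentrating cutoffs to extract $\ue^{-\nu(v)\tau_-(x,v)}$, then recover $\tilde{k}$ (hence $K$, via the explicit weight $w$) from $\alpha_1$ with $\nu$ already known, deferring the technical isolation of the singular pieces to Section~5 of \cite{choulli1996inverse}. The extra details you supply (taking logarithms, the relation $\tilde{k}(v,v')=w(v')k(v,v')/w(v)$) are consistent with the paper's conventions.
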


We remark that in \cite{choulli1996inverse}, the authors considered position-dependent parameters so it is only possible to determine the X-ray transform of $\nu(x, v)$ from $A^{lin}$. Here we are only interested in position-independent $\nu$ and $K$ so $\nu(v)$ can be uniquely determined. 

Now we sketch the proof and we refer readers to Section 5 in \cite{choulli1996inverse} for details.

\begin{proof}
For fixed $(x, v)$, we can appropriately choose $\phi_\epsilon(\tau, x', v')$ based on the support of $\alpha_0$ (see the expression above (5.17) in \cite{choulli1996inverse}) such that
$$\lim_{\epsilon\to 0}\iint_{\Gamma_+}\int_{\r_+}\alpha_0(\tau, x, v, x', v')\phi_\epsilon(\tau, x', v')\,\ud\tau \ud\sigma(x') \ud v'= \ue^{-\tau_-(x,v)\nu(v)},$$
$$\lim_{\epsilon\to 0}\iint_{\Gamma_+}\int_{\r_+}\alpha_j(\tau, x, v, x', v')\phi_\epsilon(\tau, x', v')\,\ud\tau \ud\sigma(x') \ud v'= 0,\qquad
j= 1,2,$$
so the action of $\alpha$ on $\phi_\epsilon$ gives the reconstruction formula of $\nu$ from $\alpha$.

For fixed $(x, t, v, v')$ with $v\neq  v'$, we have $\alpha_0 = 0$ and we can appropriately choose $\psi_{\epsilon_1, \epsilon_2}$ based on the support of $\alpha_1$ (see (3.9), (5.21) and (5.22) in \cite{choulli1996inverse})
such that
$$\lim_{\epsilon_1,\epsilon_2\to 0}\int_{\r_+}\int_{\partial\Omega}
\alpha_1(t-t',x,v,x',v')\psi_{\epsilon_1, \epsilon_2}(x', t')\,\ud\sigma(x') \ud t'
= \ue^{-t\nu(v)}\ue^{-\tau_-(x-tv,v')\nu(v')}\tilde{k}(v, v'),$$
$$\lim_{\epsilon_1,\epsilon_2\to 0}\int_{\r_+}\int_{\partial\Omega}
\alpha_2(t-t',x,v,x',v')\psi_{\epsilon_1, \epsilon_2}(x', t')\,\ud\sigma(x') \ud t'
= 0,$$
so the action of $\alpha$ on $\psi_{\epsilon_1, \epsilon_2}$ gives the reconstruction formula of $\tilde{k}$ (equivalent to $K$) from $\alpha$ once $\nu$ is reconstructed.
\end{proof}

\subsection{Determine the collision kernel}
Now we are ready to explicitly present our main theorem. Our result depends on the uniqueness result (Proposition~\ref{Prop:IP-1}) for the linear problem (\ref{linearibvp}) as well as the injectivity of the Gauss-Weierstrass transform (convolution with the Maxwellian $\mu$). The following theorem is the precise version of Theorem~\ref{Thm:main-1}. Recall that the Albedo operator $A$ is defined by (\ref{Aloptilde}) and $\tilde{q}, B$ are defined by (\ref{tildeq}).

\begin{thm}
Let $A^{(1)}, A^{(2)}$ be the Albedo operators corresponding to
the collision kernels $q^{(1)}, q^{(2)}$ satisfying (\ref{qform}).
Suppose 
\begin{equation}\label{A1A2}
A^{(1)}\tilde{g}= A^{(2)}\tilde{g}
\end{equation}
for any continuous $\tilde{g}$ compactly supported in $\mathbb{R}_+\times\Gamma_{-}$. Then 
$$\tilde{q}^{(1)}= \tilde{q}^{(2)}.$$
If we further assume
\begin{equation}\label{symB}
B(\theta, \rho)=
B(\tfrac{\pi}{2}-\theta, \rho),
\end{equation}
then $\tilde{q}$ is just the Cartesian representation of $q$ so we can conclude that 
$$q^{(1)}= q^{(2)}$$
in this case. (e.g. the hard sphere case (\ref{hsphere}) satisfies (\ref{symB}).)
\end{thm}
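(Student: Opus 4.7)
The plan is to reduce the problem to the inverse problem for the linear transport equation and then unwind the Maxwellian convolutions hidden in $\nu$ and $k_2$ via injectivity of the Gauss-Weierstrass transform. First, the convergence (\ref{AtoAlin}) from Proposition~\ref{Prop:Linearization} promotes the hypothesis $A^{(1)}\tilde g = A^{(2)}\tilde g$ to equality of the two linearized Albedo operators, after which Proposition~\ref{Prop:IP-1} yields $\nu^{(1)} = \nu^{(2)}$ and $K^{(1)} = K^{(2)}$; equivalently, the full kernel $k = k_2 - k_1$ of the compact part is determined. The remaining task is to decouple $k_1$ from $k_2$ and then extract $\tilde q$.

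To decouple $k_1$, I use (\ref{qform}) and the rotational invariance of the spherical measure: for any fixed $v-u$, the quantity $C_{q_0} := \int_{\mathbb{S}^2} q_0(\theta)\,\ud\omega$ is a scalar independent of the direction of $v-u$, so (\ref{nu}) becomes
$$\nu(v) = C_{q_0}\int_{\r^3}|v-u|^\gamma\mu(u)\,\ud u,$$
which is the three-dimensional Gauss-Weierstrass transform of the radial function $w \mapsto C_{q_0}|w|^\gamma$. Injectivity recovers this product, and hence the full radial function $\int_{\mathbb{S}^2}q(\theta,|v-u|)\,\ud\omega$; substituting into (\ref{k1}) determines $k_1$, so that $k_2 = k + k_1$ is also known.

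Finally, I invert (\ref{k2}) for $\tilde q$. Fix $\rho := |u-v|>0$ and decompose $\zeta = (u+v)/2 = \zeta_\parallel + \zeta_\perp$ with $\zeta_\parallel$ parallel to $u-v$ and $\zeta_\perp \in \Pi$. Since $|y+\zeta|^2 = |y+\zeta_\perp|^2 + |\zeta_\parallel|^2$ for $y \in \Pi$, formula (\ref{k2}) becomes
$$k_2(u,v) = \frac{2\ue^{-\rho^2/4}\ue^{-|\zeta_\parallel|^2}}{\rho^2}\int_\Pi \ue^{-|y+\zeta_\perp|^2}\,\tilde q(\rho,|y|)\,\ud y.$$
Varying $(u,v)$ over pairs with fixed difference lets $\zeta_\perp$ sweep out all of $\Pi \simeq \r^2$, so the right-hand integral is the two-dimensional Gauss-Weierstrass transform of the radial function $y \mapsto \tilde q(\rho,|y|)$ evaluated at $-\zeta_\perp$. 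A second application of injectivity recovers $\tilde q(\rho,\cdot)$ for every $\rho>0$, giving $\tilde q^{(1)} = \tilde q^{(2)}$. Under the symmetry assumption (\ref{symB}) we have $\tilde B = B$, so (\ref{tildeq}) reads $\tilde q(\rho\cos\theta,\rho\sin\theta)=q(\theta,\rho)$ and the equality transfers to $q^{(1)}=q^{(2)}$.

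The main obstacle is not any individual computation but recognizing the layered convolutional structure of the data: $\nu$ is a three-dimensional Gaussian convolution of an integral of $q$, $k_1$ depends on the same integral algebraically, and $k_2$ is a two-dimensional Gaussian convolution of $\tilde q$. Peeling these off in the right order ($\nu \to k_1$, then $k_2 \to \tilde q$) is essentially the whole content of the argument, and each inversion is a direct appeal to classical Gauss-Weierstrass injectivity; the symmetry hypothesis on $B$ serves only to identify $\tilde q$ with $q$ in polar coordinates.
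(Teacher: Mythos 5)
Your proposal is correct and follows essentially the same route as the paper: linearize via (\ref{AtoAlin}) to get $(A^{lin})^{(1)}=(A^{lin})^{(2)}$, apply Proposition~\ref{Prop:IP-1} to recover $\nu$ and $K$, invert the Gauss--Weierstrass transform in (\ref{nu}) to determine the angular integral of $q$ and hence $k_1$ and $k_2$, then invert the planar Gauss--Weierstrass transform in (\ref{k2}) to recover $\tilde q$, with (\ref{symB}) identifying $\tilde q$ with $q$. The only (harmless) differences are that you specialize the first inversion to the product form (\ref{qform}) where the paper keeps the general function $I(z)$, and you make explicit the $\zeta_\parallel/\zeta_\perp$ splitting that the paper handles by restricting to $\zeta\in\Pi$.
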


\begin{proof}
By (\ref{AtoAlin}) and the assumption (\ref{A1A2}) we have 
$$(A^{lin})^{(1)}= (A^{lin})^{(2)},$$
and then by Proposition~\ref{Prop:IP-1} we have 
$$\nu^{(1)}= \nu^{(2)},\qquad K^{(1)}= K^{(2)}.$$

We define the function
$$I(z):= \int_{\mathbb{S}^2} q\big(\cos^{-1}(\tfrac{|z\cdot \omega|}{|z|}), |z|\big)\,\ud\omega.$$
Now we can write (\ref{nu}) as 
$$\nu^{(j)}(v)= \big(I^{(j)}* \mu\big)(v)$$
so $\nu^{(1)}= \nu^{(2)}$ implies 
that $I^{(1)}= I^{(2)}$ based on the injectivity of the Gauss-Weierstrass transform. 

Since we can write (\ref{k1}) as 
$$k^{(j)}_1(u,v)= \mu^{\frac{1}{2}}(u)\mu^{\frac{1}{2}}(v)I^{(j)}(u-v),$$
we have $K^{(1)}_1= K^{(2)}_1$. Recall that $K^{(j)}= K^{(j)}_2- K^{(j)}_1$,
so $K^{(1)}= K^{(2)}$ implies that $K^{(1)}_2= K^{(2)}_2$.

Let $\eta:= u-v$. We can write (\ref{k2}) as
\begin{equation}\label{zetaeta}
k_2(\zeta, \eta)= \frac{2}{|\eta|^2}\ue^{-\frac{|\eta|^2}{4}}\int_{y\in \Pi}\ue^{-|y+\zeta|^2}\tilde{q}(|\eta|, |y|) \,\ud\Pi.
\end{equation}
Here we view $k_2$ as a function of the two new independent variables $\zeta, \eta$.
For each $\zeta\in \Pi$, the integral in (\ref{zetaeta}) is the value of the convolution of $\mu$ and $\tilde{q}(|\eta|, |\cdot|)$ over the plane $\Pi$ at $-\zeta$. Hence 
$K^{(1)}_2= K^{(2)}_2$ implies that $\tilde{q}^{(1)}= \tilde{q}^{(2)}$
based on the injectivity of the Gauss-Weierstrass transform.
\end{proof}

We remark that the proof above works for the general collision kernel $q(\theta, |v-u|)$. We restrict ourselves to $q$ which has the form (\ref{qform}) mainly because this assumption is required for the well-posedness of the forward problem (Proposition~\ref{Prop:WP}).

We mention that the 1-dimensional Gauss-Weierstrass transform is closely related with the Laplace transform. We also have an inversion formula involving Hermite polynomials for the general multi-dimensional Gauss-Weierstrass transform. We refer readers to Chapter 5 in \cite{brychkov1989integral} for details.

\medskip
\bibliographystyle{plain}
{\small\bibliography{Reference7}}
\end{document}